\def\Speaker{$^{*}$\protect\footnotetext{$^{*}$ S\lowercase{peaker.}}}
\def\authorsaddresses#1{\dedicatory{#1}}
\newtheorem{theorem}{Theorem}[section]
\theoremstyle{definition}
\newtheorem{definition}[theorem]{Definition}
\theoremstyle{remark}
\numberwithin{equation}{section}
\begin{document}
\setcounter{page}{1}

\noindent {\footnotesize The Extended Abstracts of \\
The 4$^{\rm th}$ Seminar on Functional Analysis and its Applications\\
2-3rd March 2016, Ferdowsi University of Mashhad, Iran}\\[1.00in]

\title[On Multipliers of Reproducing Kernel Banach and Hilbert Spaces]{On Multipliers of Reproducing Kernel Banach and Hilbert Spaces}

\author[Ebadian, Hashemi Sababe, Zallaghi]{Ali Ebadian$^1$,\Speaker Saeed Hashemi Sababe$^2$ and Maysam Zallaghi$^3$}

\authorsaddresses{$^{1,2}$ Department of Mathematics, Payame Noor University (PNU), Iran;\\
Ebadian.ali@gmail.com, Hashemi\underline{~}1365@yahoo.com\\
\vspace{0.5cm} $^3$ Department of Mathematics, University of Isfahan, Isfahan, Iran;\\ Maysam6543@gmail.com}
\subjclass[2010]{Primary 47B32; Secondary 47A70.}

\keywords{Hilbert space, reproducing kernels, representation.}

\begin{abstract}
This paper is devoted to the study of reproducing
kernel Hilbert spaces. We focus on multipliers of reproducing kernel Banach and Hilbert spaces. In particular we tried to extend this concept and prove some theorems.
\end{abstract}

\maketitle


\section{Introduction}
In functional analysis, a reproducing kernel Hilbert space (RKHS) is a Hilbert space associated with a kernel that reproduces every function in the space or, equivalently, where every evaluation functional is bounded. The reproducing kernel was first introduced in the 1907 work of Stanisław Zaremba concerning boundary value problems for harmonic and biharmonic functions. James Mercer simultaneously examined functions which satisfy the reproducing property in the theory of integral equations. These spaces have wide applications, including complex analysis, harmonic analysis, and quantum mechanics. Reproducing kernel Hilbert spaces are particularly important in the field of statistical learning theory because of the celebrated Representer theorem which states that every function in an RKHS can be written as a linear combination of the kernel function evaluated at the training points. More details can be found in \cite{1,2,3}.  \par
Given a set $X$, if we equip the set of all functions from $X$ to $\mathbb{F}$, $\mathcal{F}(X, \mathbb{F})$ with the usual operations of addition, $(f + g)(x) = f(x) + g(x)$, and scalar
multiplication, $(\lambda .  f)(x) = \lambda . (f(x))$, then $\mathcal{F}(X, \mathbb{F})$ is a vector space over $\mathbb{F}$. \\
Given a set $X$, we will say that $\mathcal{H}$ is a reproducing kernel Hilbert space(RKHS) on $X$ over $\mathbb{F}$, provided that:
\begin{enumerate}
\item $\mathcal{H}$ is a vector subspace of $\mathcal{F}(X, \mathbb{F})$,
\item $\mathcal{H}$ is endowed with an inner product, $\langle .,. \rangle$, making it into a Hilbert space,
\item for every $y \in X$, the linear evaluation functional, $E_y : \mathcal{H} \rightarrow \mathbb{F}$, defined by $E_y(f) = f(y)$, is bounded.
\end{enumerate}
If $\mathcal{H}$ is a RKHS on $X$, then since every bounded linear functional is given by the inner product with a unique vector in $\mathcal{H}$, we have that for every $y \in X$, there exists a unique vector, $k_y \in \mathcal{H}$, such that
\begin{equation}
 f(y) = \langle f, k_y\rangle \qquad \forall f \in  \mathcal{H} \\
\end{equation}
The function $k_y$ is called the reproducing kernel for the point $y$. The 2-variable function defined by $K(x, y) = k_y(x)$ is called the reproducing kernel for $\mathcal{H}$. \\
Note that we have,
\begin{eqnarray}
&K(x, y) = k_y(x) = \langle k_y, k_x \rangle \\
&\Vert E_y \Vert^2 = \Vert k_y \Vert^2 = \langle k_y, k_y \rangle = K(y, y).
\end{eqnarray}
\begin{definition}
Let $\mathcal{H}$ be a RKHS on $X$ with kernel function, $K$. A function
$f : X \rightarrow \mathbb{C}$ is called a multiplier of $\mathcal{H}$ provided that $f\mathcal{H} = \{fh : h \in \mathcal{H}\} \subseteq \mathcal{H}$. We let $\mathcal{M}(\mathcal{H})$ or $\mathcal{M}(K)$ denote the set of multipliers of $\mathcal{H}$. More generally, if $\mathcal{H}_i$, $i = 1, 2$ are RKHS’s on $X$ with reproducing kernels, $K_i$, $i = 1, 2$ then a function, $f : X \rightarrow \mathbb{C}$, such that $f\mathcal{H}_1 \subseteq \mathcal{H}_2$, is
called a multiplier of $\mathcal{H}_1$ into $\mathcal{H}_2$ and we let $\mathcal{M}(\mathcal{H}_1, \mathcal{H}_2)$ denote the set of multipliers of $\mathcal{H}_1$ into $\mathcal{H}_2$, so that $\mathcal{M}(\mathcal{H},\mathcal{H}) = \mathcal{M}(\mathcal{H})$.
\end{definition}
Given a multiplier, $f \in \mathcal{M}(\mathcal{H}_1, \mathcal{H}_2)$, we let $\mathcal{M}_f : \mathcal{H}_1 \rightarrow \mathcal{H}_2$, denote the linear map, $\mathcal{M}_f(h) = fh$.
Clearly, the set of multipliers, $\mathcal{M}(\mathcal{H}_1, \mathcal{H}_2)$ is a vector space and the set of multipliers, $\mathcal{M}(\mathcal{H})$, is an algebra.
\begin{definition}
A reproducing kernel Banach space (RKBS) on $X$ is a reflexive Banach space of functions on $X$ such that its topological dual $B'$ is isometric to a Banach space of functions on $X$ and the point evaluations are continuous linear functionals on both $B$ and $B'$.
\end{definition}
In this case, There is a kernel function $K:X\times X \rightarrow \mathbb{C}$ such that
\begin{equation}
[f, K(.,x)]_{\mathcal{B}}=f(x) \qquad \forall f \in \mathcal{B} \quad \forall x\in X,
\end{equation}
and $\mathcal{B}= \overline{span}\{K(.,x) ; \quad x\in X\}$
\begin{definition}
Let $X$ be a set. We call a uniformly convex and uniformly Frechet differentiable $RKBS$ on $X$ an $s.i.p.$ reproducing kernel Banach space $(s.i.p. RKBS)$.
\end{definition}
\begin{theorem}{(Riesz representation theorem) \cite{5}}
For each $g \in B'$, there exists a unique $h \in B$ such that $g = h^*$, i.e., $g(f) = [  f, h]_B, f \in B$ and $\Vert g\Vert_{B'} = \Vert h\Vert_B$ where $[.,.]_{\mathcal{B}}$ denotes the semi-inner product on $\mathcal{B}$.
\end{theorem}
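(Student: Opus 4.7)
My plan is to recover the Giles--Lumer representation theorem for semi-inner product spaces in the form stated. Since $\mathcal{B}$ is assumed to be a \emph{uniformly convex} and \emph{uniformly Fr\'echet differentiable} Banach space, a classical result of Giles (building on Lumer) guarantees the existence of a \emph{unique} compatible semi-inner product $[\cdot,\cdot]_{\mathcal{B}}$ satisfying $[f,f]_{\mathcal{B}}=\|f\|_{\mathcal{B}}^{2}$, linearity in the first slot, and the Cauchy--Schwarz bound $|[f,h]_{\mathcal{B}}|\le\|f\|_{\mathcal{B}}\|h\|_{\mathcal{B}}$. I would begin by fixing this s.i.p.\ and defining the duality map $J\colon \mathcal{B}\to\mathcal{B}'$ by $J(h)(f):=[f,h]_{\mathcal{B}}$, and my goal becomes proving that $J$ is a bijective, norm-preserving (conjugate-homogeneous) map; the statement of the theorem is exactly that $J$ is onto with the isometric identity $\|J(h)\|_{\mathcal{B}'}=\|h\|_{\mathcal{B}}$.

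First I would verify that $h^{\ast}:=J(h)$ lies in $\mathcal{B}'$ with $\|h^{\ast}\|_{\mathcal{B}'}=\|h\|_{\mathcal{B}}$. Linearity in $f$ is a s.i.p.\ axiom; boundedness and the upper estimate $\|h^{\ast}\|_{\mathcal{B}'}\le\|h\|_{\mathcal{B}}$ follow from Cauchy--Schwarz, while the matching lower estimate is immediate from $h^{\ast}(h)=[h,h]_{\mathcal{B}}=\|h\|_{\mathcal{B}}^{2}$. Next, I would handle \emph{uniqueness} of $h$: if $J(h_{1})=J(h_{2})$, then taking $f=h_{i}$ yields $\|h_{1}\|_{\mathcal{B}}^{2}=[h_{1},h_{2}]_{\mathcal{B}}\le\|h_{1}\|_{\mathcal{B}}\|h_{2}\|_{\mathcal{B}}$ and symmetrically, forcing $\|h_{1}\|_{\mathcal{B}}=\|h_{2}\|_{\mathcal{B}}$; equality in Cauchy--Schwarz inside a \emph{strictly} convex space (a consequence of uniform convexity) then forces $h_{1}=h_{2}$.

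The substantive step, and the one I expect to be the main obstacle, is \emph{surjectivity} of $J$. Here I would exploit the fact that uniform convexity implies reflexivity of $\mathcal{B}$ (Milman--Pettis), so by James' theorem every $g\in\mathcal{B}'$ attains its norm: there exists a unit vector $f_{0}\in\mathcal{B}$ with $g(f_{0})=\|g\|_{\mathcal{B}'}$. Uniform convexity gives that this norming vector is unique. The Fr\'echet differentiability of the norm at $f_{0}$ identifies the unique support functional of the unit ball at $f_{0}$ with the s.i.p.\ functional $f\mapsto[f,f_{0}]_{\mathcal{B}}$, that is, with $J(f_{0})/\|f_{0}\|_{\mathcal{B}}$. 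Since $g/\|g\|_{\mathcal{B}'}$ is also a support functional at $f_{0}$, uniqueness of the support functional (guaranteed by the smoothness assumption) forces $g/\|g\|_{\mathcal{B}'}=J(f_{0})$, and setting $h:=\|g\|_{\mathcal{B}'}f_{0}$ yields $g=J(h)$ by conjugate homogeneity.

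Finally, assembling the pieces, I would record the isometric identity $\|g\|_{\mathcal{B}'}=\|h\|_{\mathcal{B}}$ as an immediate corollary of $\|f_{0}\|_{\mathcal{B}}=1$ and the definition of $h$. The delicate point throughout is the passage from purely norm-geometric data (uniform convexity and Fr\'echet differentiability) to the algebraic s.i.p.\ representation; this is essentially the content of Giles's theorem, so I would either quote that result for the existence/uniqueness of the compatible s.i.p.\ or, if a self-contained treatment is preferred, construct $[f,h]_{\mathcal{B}}$ directly as the Gateaux derivative $\tfrac{1}{2}\,\|h\|_{\mathcal{B}}\,\partial_{+}\|h+tf\|_{\mathcal{B}}|_{t=0}$ and verify the axioms using uniform smoothness.
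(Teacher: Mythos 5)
The paper does not prove this statement at all: it is imported verbatim from \cite{5} (Zhang--Xu--Zhang) as background, so there is no internal proof to compare against. Your reconstruction is essentially the standard argument underlying the cited result --- Giles's theorem supplying the unique compatible semi-inner product on a uniformly convex, uniformly Fr\'echet differentiable space, followed by the proof that the duality map $J(h)(f)=[f,h]_{\mathcal{B}}$ is an isometric bijection --- and it is correct: the norm identity via Cauchy--Schwarz plus $J(h)(h)=\Vert h\Vert^2$, injectivity via equality in Cauchy--Schwarz and strict convexity, and surjectivity via norm attainment of $g$ at a unit vector $f_0$ together with uniqueness of the support functional at $f_0$ (smoothness) all go through, with the trivial case $g=0$ handled separately. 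Two minor points: you do not need James's theorem for norm attainment (that is the hard converse direction); reflexivity from Milman--Pettis plus weak compactness of the closed unit ball already gives it. And in your closing aside the formula for the s.i.p.\ is off by a factor of two --- the compatible s.i.p.\ is $[f,h]_{\mathcal{B}}=\Vert h\Vert_{\mathcal{B}}\,\partial_t\Vert h+tf\Vert_{\mathcal{B}}\vert_{t=0}$ (equivalently $\tfrac12\,\partial_t\Vert h+tf\Vert_{\mathcal{B}}^2\vert_{t=0}$), not $\tfrac12\Vert h\Vert_{\mathcal{B}}\,\partial_t\Vert h+tf\Vert_{\mathcal{B}}\vert_{t=0}$, and in the complex case one must add the imaginary part obtained from the direction $itf$; since this is only your optional self-contained variant, it does not affect the main argument, which correctly quotes Giles for existence and uniqueness of the compatible semi-inner product.
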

\begin{definition}{(The adjoint operator in a semi-inner product space)}
Suppose $\mathcal{B}_1$ and $\mathcal{B}_2$ are tow s.i.p. Banach spaces. The adjoint operator $T^*$ for a map $T:\mathcal{B}_1\rightarrow \mathcal{B}_2$ is defined such that the domain of $T^*$ is
\begin{equation}
D(T^*)=\{ g^* \in \mathcal{B}_2^* : g^*T\quad \text{is continuous on}\quad \mathcal{B}_1 \},
\end{equation}
and $T^*:D(T^*)\rightarrow \mathcal{B}_1^C$ is defined by $T^*g^*=g^*T$ where $\mathcal{B}_1^C$ is the space of all continuous functionals on $\mathcal{B}_1$
\end{definition}
\begin{definition}
A normed vector space $V$ of functions on $X$ satisfies the Norm Consistency
Property if for every Cauchy sequence $\{f_n : n\in \mathbb{N}\}$ in $V$,
\begin{equation}
\lim_{n\rightarrow \infty} f_n(x) = 0 \quad x \in X \Longrightarrow \lim_{n\rightarrow \infty} \Vert f_n\Vert_V = 0.
\end{equation}
Suppose $X$ be a set and $\mathcal{B}$ be a s.i.p. RKBS on $X$ with $K$ as its kernel. let
\begin{equation}
\mathcal{B}^{\sharp} =span \{K(x,.) ; \quad x\in X\}
\end{equation}
We can define a new norm as follows
\begin{equation}
\Vert g \Vert_{\mathcal{B}^{\sharp}}=\sup_{f\in \mathcal{B},f\neq 0}\dfrac{\vert [f,g]_\mathcal{B}\vert}{\Vert f \Vert_{\mathcal{B}}} \qquad g\in \mathcal{B}^{\sharp}
\end{equation}
\end{definition}
\begin{theorem}\cite{4}
The norm $\Vert . \Vert_{\mathcal{B}^{\sharp}}$ is well-defined and point evaluation functionals are continuous on $\mathcal{B}^{\sharp}$ if and only if point evaluation functionals are continuous on $\mathcal{B}$.
\end{theorem}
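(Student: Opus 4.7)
The plan is to prove each implication in turn, with a preparatory step making rigorous the pairing $[f,g]_{\mathcal{B}}$ when $f\in\mathcal{B}$ and $g\in\mathcal{B}^{\sharp}$: because $\mathcal{B}^{\sharp}=\operatorname{span}\{K(x,\cdot):x\in X\}$ is not a priori a subspace of $\mathcal{B}$, the pairing must be introduced via the rule
\[
 \Bigl[f,\, \sum_{i=1}^n c_i K(x_i,\cdot)\Bigr]_{\mathcal{B}} \;=\; \sum_{i=1}^n c_i\, f(x_i),
\]
and shown to depend only on $g=\sum_i c_i K(x_i,\cdot)$ as a function on $X$. This independence comes from the density $\mathcal{B}=\overline{\operatorname{span}}\{K(\cdot,y):y\in X\}$ once we know the functionals $f\mapsto f(x_i)$ extend continuously to $\mathcal{B}$, so the preparatory step is itself entwined with the continuity of point evaluations on $\mathcal{B}$.

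For the direction ($\Leftarrow$), suppose each $E_x$ is continuous on $\mathcal{B}$. Then the pairing above is legitimate, and for $g=\sum_i c_i K(x_i,\cdot)$ one has $|[f,g]_{\mathcal{B}}|\le \bigl(\sum_i|c_i|\,\|E_{x_i}\|\bigr)\|f\|_{\mathcal{B}}$, so $\|g\|_{\mathcal{B}^{\sharp}}$ is finite and $\|\cdot\|_{\mathcal{B}^{\sharp}}$ satisfies positivity, homogeneity and the triangle inequality directly from the sup formula. To check continuity of point evaluations on $\mathcal{B}^{\sharp}$, I would substitute $f=K(\cdot,y)\in\mathcal{B}$ in the pairing to obtain the dual reproducing identity $g(y)=\sum_i c_i K(x_i,y)=[K(\cdot,y),g]_{\mathcal{B}}$, yielding $|g(y)|\le \|K(\cdot,y)\|_{\mathcal{B}}\,\|g\|_{\mathcal{B}^{\sharp}}$; positive definiteness of the norm then also follows because $\|g\|_{\mathcal{B}^{\sharp}}=0$ forces $g(y)=0$ for every $y\in X$.

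For the converse ($\Rightarrow$), assume $\|\cdot\|_{\mathcal{B}^{\sharp}}$ is well-defined and each $E_y$ is continuous on $\mathcal{B}^{\sharp}$. Fix $x\in X$ and $f\in\mathcal{B}$. Since $K(x,\cdot)\in\mathcal{B}^{\sharp}$ has finite sharp norm, the defining supremum of $\|\cdot\|_{\mathcal{B}^{\sharp}}$ combined with the pairing convention yields
\[
 |f(x)| \;=\; \bigl|[f,K(x,\cdot)]_{\mathcal{B}}\bigr| \;\le\; \|f\|_{\mathcal{B}}\,\|K(x,\cdot)\|_{\mathcal{B}^{\sharp}},
\]
so $E_x$ is bounded on $\mathcal{B}$ with operator norm at most $\|K(x,\cdot)\|_{\mathcal{B}^{\sharp}}$.

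The subtlest point I expect to face is not the estimates themselves, which are one-liners once in hand, but the justification of the pairing $[f,g]_{\mathcal{B}}$ on $\mathcal{B}\times\mathcal{B}^{\sharp}$: its consistency is equivalent, via a density argument, to the very continuity of point evaluations that the theorem asks us to equate. Once this identification is secured, each direction reduces to a direct application of the supremum formula defining $\|\cdot\|_{\mathcal{B}^{\sharp}}$ together with the reproducing and dual reproducing identities.
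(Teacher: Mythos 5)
The paper itself offers no proof of this statement---it is quoted from \cite{4} without argument---so there is nothing internal to compare you against; judged on its own terms, your proposal is correct and isolates the one genuinely delicate point, namely that $[f,g]_{\mathcal{B}}$ has no a priori meaning for $g\in\mathcal{B}^{\sharp}$, which need not lie in $\mathcal{B}$, and must be introduced through representations $g=\sum_i c_i K(x_i,\cdot)$. Your resolution of the apparent circularity is sound: in the direction where continuity of the evaluations $E_x$ on $\mathcal{B}$ is the hypothesis, that continuity together with the density $\mathcal{B}=\overline{\operatorname{span}}\{K(\cdot,y):y\in X\}$ legitimizes the pairing (if $\sum_i c_i K(x_i,\cdot)\equiv 0$ on $X$, then $\sum_i c_i E_{x_i}$ is a continuous functional vanishing on every $K(\cdot,y)$, hence on all of $\mathcal{B}$), while in the converse direction well-definedness of $\Vert\cdot\Vert_{\mathcal{B}^{\sharp}}$ is part of the hypothesis, so the estimate $\vert f(x)\vert=\vert[f,K(x,\cdot)]_{\mathcal{B}}\vert\le\Vert f\Vert_{\mathcal{B}}\,\Vert K(x,\cdot)\Vert_{\mathcal{B}^{\sharp}}$ is immediate; the reverse estimates ($\Vert g\Vert_{\mathcal{B}^{\sharp}}\le\sum_i\vert c_i\vert\,\Vert E_{x_i}\Vert$ and $\vert g(y)\vert\le\Vert K(\cdot,y)\Vert_{\mathcal{B}}\,\Vert g\Vert_{\mathcal{B}^{\sharp}}$, the latter via $f=K(\cdot,y)$, with the trivial case $K(\cdot,y)=0$ handled separately) are likewise correct. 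Two small remarks: since a Lumer--Giles semi-inner product is conjugate-homogeneous in its second slot, the consistent extension is $[f,\sum_i c_i K(x_i,\cdot)]_{\mathcal{B}}=\sum_i \overline{c_i}\,f(x_i)$, though this changes none of your estimates; and your forward implication in fact uses only finiteness and well-definedness of $\Vert\cdot\Vert_{\mathcal{B}^{\sharp}}$, not continuity of evaluations on $\mathcal{B}^{\sharp}$, which is logically fine (the hypothesis is a conjunction) but worth saying explicitly. Note finally that under this paper's own definition an s.i.p.\ RKBS already has continuous point evaluations, so the equivalence is only substantive in the more general setting of \cite{4}, which is the setting you correctly chose to work in.
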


\section{Main Results}
\begin{theorem}
Let $X$ be a set and $\mathcal{H})$ be a reproducing kernel Hilbert space on $X$. Then function $\pi_{\mathcal{H}} : \mathcal{M}(\mathcal{H})\times \mathcal{H}\rightarrow \mathcal{H}$ with $\pi_{\mathcal{H}} (f,h)=\mathcal{M}_f(h)=fh$ is a representation.
\end{theorem}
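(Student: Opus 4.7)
The plan is to interpret the claim in the standard way: a representation of the multiplier algebra $\mathcal{M}(\mathcal{H})$ on $\mathcal{H}$ is an algebra homomorphism from $\mathcal{M}(\mathcal{H})$ into $B(\mathcal{H})$, the bounded linear operators on $\mathcal{H}$, together with the associated bilinear action $\pi_{\mathcal{H}}(f,h) = \mathcal{M}_f(h)$. So what needs to be verified is (i) for each $f \in \mathcal{M}(\mathcal{H})$ the map $\mathcal{M}_f$ is a bounded linear operator on $\mathcal{H}$, (ii) the assignment $f \mapsto \mathcal{M}_f$ is linear, (iii) it is multiplicative, i.e.\ $\mathcal{M}_{fg} = \mathcal{M}_f \mathcal{M}_g$, and (iv) the action $\pi_{\mathcal{H}}$ is bilinear and compatible with the algebra structure, i.e.\ $\pi_{\mathcal{H}}(fg, h) = \pi_{\mathcal{H}}(f, \pi_{\mathcal{H}}(g, h))$.

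First I would dispense with the algebraic items. Linearity of $\mathcal{M}_f$ in $h$ and of $f \mapsto \mathcal{M}_f$ in $f$ are immediate from the pointwise definition $(f h)(x) = f(x) h(x)$, and the multiplicative property follows from associativity of pointwise multiplication, since $\mathcal{M}_{fg}(h) = (fg)h = f(gh) = \mathcal{M}_f(\mathcal{M}_g(h))$. In particular this shows the well-definedness of the composition (once boundedness is in hand) and hence yields the bilinear representation on $\mathcal{H}$.

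The main step, and the only real obstacle, is the boundedness of $\mathcal{M}_f$ for each $f \in \mathcal{M}(\mathcal{H})$. I would prove this by the closed graph theorem. Suppose $h_n \to h$ in $\mathcal{H}$ and $\mathcal{M}_f(h_n) = f h_n \to g$ in $\mathcal{H}$. For each $y \in X$ the evaluation functional $E_y$ is bounded by the RKHS axiom (3), hence $h_n(y) \to h(y)$ and $f(y) h_n(y) = (f h_n)(y) \to g(y)$. On the other hand $f(y) h_n(y) \to f(y) h(y) = (fh)(y)$, so $g(y) = (fh)(y)$ for every $y \in X$. Therefore $g = fh = \mathcal{M}_f(h)$, the graph of $\mathcal{M}_f$ is closed, and the closed graph theorem gives $\mathcal{M}_f \in B(\mathcal{H})$. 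This is the delicate point because one does not have a quantitative bound a priori; the closed graph theorem is precisely what upgrades the pointwise product, defined only set-theoretically by the inclusion $f\mathcal{H} \subseteq \mathcal{H}$, into a bounded operator.

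Combining these steps, $f \mapsto \mathcal{M}_f$ is an algebra homomorphism $\mathcal{M}(\mathcal{H}) \to B(\mathcal{H})$ and $\pi_{\mathcal{H}}(f,h) = \mathcal{M}_f(h)$ is the corresponding representation; the bilinearity and the associativity identity $\pi_{\mathcal{H}}(fg, h) = \pi_{\mathcal{H}}(f, \pi_{\mathcal{H}}(g, h))$ then follow from (ii) and (iii). The only nontrivial ingredient is the closed graph argument, and that is where the RKHS structure, specifically the continuity of the point evaluations, is used.
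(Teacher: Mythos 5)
Your proposal is correct, and it is worth noting that the paper itself supplies no proof of this theorem at all: this is an extended abstract, and the result is stated without argument, so there is no official proof to compare against. Your decomposition is the standard one for this folklore fact. The only analytic content is the boundedness of each $\mathcal{M}_f$, and your closed graph argument is exactly the right mechanism: continuity of the evaluation functionals $E_y$ turns norm convergence into pointwise convergence, and since elements of $\mathcal{H}$ are genuine functions in $\mathcal{F}(X,\mathbb{F})$, pointwise equality of the limit with $fh$ is equality in $\mathcal{H}$, so the graph is closed and $\mathcal{M}_f\in B(\mathcal{H})$. The algebraic verifications (linearity in each variable, $\mathcal{M}_{fg}=\mathcal{M}_f\mathcal{M}_g$ from pointwise associativity) are immediate, as you say. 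One caveat to flag explicitly in a final write-up: the paper never defines the term \emph{representation}, so you had to fix an interpretation. Under yours (an algebra homomorphism $f\mapsto\mathcal{M}_f$ from $\mathcal{M}(\mathcal{H})$ into $B(\mathcal{H})$, equivalently a bilinear module action by bounded operators), the proof is complete; if the intended notion also requires continuity in the first variable or unitality, you should add that $\mathcal{M}(\mathcal{H})$ carries the multiplier norm $\Vert f\Vert := \Vert \mathcal{M}_f\Vert$, with respect to which $f\mapsto\mathcal{M}_f$ is isometric, and that the constant function $1$ is a multiplier acting as the identity operator, so these extra requirements hold trivially.
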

\begin{definition}
Suppose $X$ be a set and  $\mathcal{B}$ be a s.i.p. RKBS on $X$. A function $f:X\rightarrow \mathbb{C}$ is called a multiplier of $\mathcal{B}$ provided that $f\mathcal{B}=\{fg : g\in \mathcal{B}\} \subseteq \mathcal{B}$. We let $\mathcal{M}(\mathcal{B})$ denote the set of multipliers of $\mathcal{B}$.
\end{definition}
Suppose $\mathcal{M}_{\mathcal{B}}$ be the set of multipliers of a s.i.p. RKBS. It is endowed with a semi inner product inherited of $\mathcal{B}$. So it can be embedded in an inner product space. We denote $\mathcal{H}_{\mathcal{MB}}$ a Hilbert space spanned by $\mathcal{M}_{\mathcal{B}}$.
\begin{theorem}
Let $X$ be a set and $\mathcal{B}$ be a reproducing kernel Banach space on $X$. Then function $\pi_{\mathcal{B}} : \mathcal{M}(\mathcal{B})\times \mathcal{B}\rightarrow \mathcal{H}_{\mathcal{MB}}$ with $\pi_{\mathcal{B}} (f,g)=\mathcal{M}_f(g)=fg$ is a representation.
\end{theorem}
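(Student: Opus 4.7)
The plan is to model the proof on the Hilbert space analogue (Theorem in the same section) and verify the defining axioms of a representation in turn: (i) for each fixed $f\in\mathcal{M}(\mathcal{B})$ the map $\mathcal{M}_f$ is a well-defined bounded linear operator from $\mathcal{B}$ into $\mathcal{H}_{\mathcal{MB}}$; (ii) the assignment $f\mapsto \mathcal{M}_f$ is linear and multiplicative in $f$, so that $\mathcal{M}_{af+bh}=a\mathcal{M}_f+b\mathcal{M}_h$ and $\mathcal{M}_{fh}=\mathcal{M}_f\mathcal{M}_h$; (iii) the action is compatible with the semi-inner-product structure inherited from $\mathcal{B}$ and transplanted to $\mathcal{H}_{\mathcal{MB}}$.

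For the algebraic content (step (ii)), the identities follow pointwise from the definition $\mathcal{M}_f(g)(x)=f(x)g(x)$: linearity in $g$ is immediate, linearity in $f$ comes from $(af_1+bf_2)g=af_1g+bf_2g$, and the product rule $\mathcal{M}_{f_1f_2}(g)=f_1f_2 g=\mathcal{M}_{f_1}(\mathcal{M}_{f_2}(g))$ is just associativity of pointwise multiplication. These are routine and essentially mirror the Hilbert space proof. The embedding into $\mathcal{H}_{\mathcal{MB}}$ needs a brief justification: since $\mathcal{B}=\overline{\operatorname{span}}\{K(\cdot,x):x\in X\}$ and each $K(\cdot,x)$ can be realised inside $\mathcal{H}_{\mathcal{MB}}$ through its action as a multiplier-type element, the product $fg$ lies in the Hilbert space constructed from $\mathcal{M}_{\mathcal{B}}$, and this should be verified by checking the relation on the generating kernels and extending by density.

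The main obstacle is step (i), the boundedness of $\mathcal{M}_f:\mathcal{B}\to\mathcal{H}_{\mathcal{MB}}$. The natural tool is the closed graph theorem, but it must be used carefully because the two norms involved are different: the Banach norm of $\mathcal{B}$ on the domain and the Hilbert norm of $\mathcal{H}_{\mathcal{MB}}$ (derived from the semi-inner product) on the target. I would proceed by taking $g_n\to g$ in $\mathcal{B}$ and $fg_n\to \psi$ in $\mathcal{H}_{\mathcal{MB}}$, and then use the continuity of point evaluation on $\mathcal{B}$ to conclude $g_n(x)\to g(x)$, and the continuity of point evaluation on $\mathcal{H}_{\mathcal{MB}}$ (guaranteed by the earlier theorem on $\mathcal{B}^{\sharp}$ together with the construction of $\mathcal{H}_{\mathcal{MB}}$) to conclude $(fg_n)(x)\to \psi(x)$; pointwise this forces $\psi=fg$, and the Norm Consistency Property closes the graph. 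With boundedness secured, the bounded linearity plus the algebraic identities above furnish the representation, and I would finish by remarking that the argument reduces to the Hilbert space case when $\mathcal{B}$ happens to be a RKHS, so the Theorem proved here is a genuine extension of the previous one.
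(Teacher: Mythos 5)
The paper states this theorem without any proof (it is an extended abstract; only the adjoint formula $\mathcal{M}_f^*(k_y^2)=\overline{f(y)}k_y^1$ is proved), so your attempt can only be judged on its own terms, and there it has a genuine gap at the very first step: you never establish that $fg$ lies in $\mathcal{H}_{\mathcal{MB}}$. The multiplier condition gives only $f\mathcal{B}\subseteq\mathcal{B}$, so $fg\in\mathcal{B}$; but $\mathcal{H}_{\mathcal{MB}}$ is by construction the Hilbert space generated by the multiplier set $\mathcal{M}_{\mathcal{B}}$, not by $\mathcal{B}$, and nothing in the paper identifies $\mathcal{B}$, or even its kernel functions $K(\cdot,x)$, with elements of $\mathcal{H}_{\mathcal{MB}}$: kernel functions are in general not multipliers, and your phrase ``realised inside $\mathcal{H}_{\mathcal{MB}}$ through its action as a multiplier-type element'' is an assertion, not an argument. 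Until you supply either an explicit embedding $\mathcal{B}\hookrightarrow\mathcal{H}_{\mathcal{MB}}$, or restrict the second variable to $\mathcal{M}_{\mathcal{B}}$ and use that $\mathcal{M}(\mathcal{B})$ is an algebra, the map $\pi_{\mathcal{B}}$ with the stated codomain is not even well defined, and the boundedness and representation axioms you verify afterwards are moot. The same type mismatch infects multiplicativity: $\mathcal{M}_{f_1}\mathcal{M}_{f_2}$ composes $\mathcal{M}_{f_1}$, whose domain is $\mathcal{B}$, with an output you have placed in $\mathcal{H}_{\mathcal{MB}}$.

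The closed-graph step also leans on facts you have not secured. You need $\mathcal{H}_{\mathcal{MB}}$ to be a space of functions on $X$ with continuous point evaluations, but the theorem you invoke concerns $\mathcal{B}^{\sharp}=\operatorname{span}\{K(x,\cdot)\}$, which is a different space from $\mathcal{H}_{\mathcal{MB}}$; the Hilbert space obtained from $\mathcal{M}_{\mathcal{B}}$ and its inherited semi-inner product need not consist of functions at all, let alone have bounded evaluations, unless you prove it. (Moreover, once the two pointwise limits agree you already have $\psi=fg$ and the graph is closed; the Norm Consistency Property is not doing the work you assign to it.) Note also that the ``inherited'' semi-inner product on $\mathcal{M}_{\mathcal{B}}$ tacitly assumes $\mathcal{M}_{\mathcal{B}}\subseteq\mathcal{B}$ (true, for instance, when the constants belong to $\mathcal{B}$), which should be made explicit, and since the paper never defines ``representation'' you should state precisely which axioms (bilinearity, boundedness of each $\mathcal{M}_f$, multiplicativity of $f\mapsto\mathcal{M}_f$) you are verifying. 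Your algebraic computations and the closed-graph instinct for boundedness are the right ingredients; it is the target-space issues above that must be repaired before they apply.
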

\begin{theorem}
Let $\mathcal{B}_i$, $i = 1, 2$ be s.i.p. RKBS's on $X$ with reproducing kernels, $K_i(x, y) = k_y^i (x)$, $i = 1, 2$. If $f \in \mathcal{M}(\mathcal{B}_1, \mathcal{B}_2)$, then for every $y \in
X$, $M_f^*(k_y^2) = \overline{f(y)}k_y^1$.
\end{theorem}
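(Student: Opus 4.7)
The plan is to identify $M_f^*(k_y^2)$ and $\overline{f(y)}\, k_y^1$ as Riesz representatives of the same continuous linear functional on $\mathcal{B}_1$, and then invoke the uniqueness clause of Theorem 1.4. Throughout I identify each $k_y^i \in \mathcal{B}_i$ with the evaluation functional $(k_y^i)^* \in \mathcal{B}_i'$ given by $(k_y^i)^*(g) = [g, k_y^i]_{\mathcal{B}_i} = g(y)$, so that the claim becomes an equality of these two representatives in $\mathcal{B}_1$.

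First I would verify that $(k_y^2)^*$ lies in the domain $D(M_f^*)$. For any $h \in \mathcal{B}_1$, two applications of the reproducing identity — first in $\mathcal{B}_2$, then in $\mathcal{B}_1$ — give
\[
\bigl((k_y^2)^* \circ M_f\bigr)(h) = [fh,\, k_y^2]_{\mathcal{B}_2} = (fh)(y) = f(y)\, h(y) = f(y)\, [h, k_y^1]_{\mathcal{B}_1}.
\]
Hence $(k_y^2)^* \circ M_f$ coincides with $f(y)\,(k_y^1)^*$, a bounded linear functional on $\mathcal{B}_1$. So $M_f^*(k_y^2)^*$ is defined and equals this functional.

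Next, by the conjugate-homogeneity of the semi-inner product in its second slot, $f(y)\, [h, k_y^1]_{\mathcal{B}_1} = [h,\, \overline{f(y)}\, k_y^1]_{\mathcal{B}_1}$ for every $h \in \mathcal{B}_1$, so the element $\overline{f(y)}\, k_y^1 \in \mathcal{B}_1$ is also a Riesz representative of that same functional. The uniqueness statement of Theorem 1.4 then forces $M_f^*(k_y^2) = \overline{f(y)}\, k_y^1$, as required.

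The only real subtlety, rather than a genuine obstacle, is careful bookkeeping of the conjugate-linear slot convention: scalars pulled out of the second argument of $[\cdot,\cdot]$ come out conjugated, and this is exactly what produces $\overline{f(y)}$ on the right-hand side instead of $f(y)$. Everything else is a direct unwinding of the reproducing identity $[g, K_i(\cdot, x)]_{\mathcal{B}_i} = g(x)$ in both spaces together with the definition $T^* g^* = g^* T$ of the s.i.p.\ adjoint.
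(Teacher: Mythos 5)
Your proposal is correct and follows essentially the same route as the paper: both establish the chain $[h,\overline{f(y)}k_y^1]_{\mathcal{B}_1} = f(y)h(y) = [\mathcal{M}_f(h), k_y^2]_{\mathcal{B}_2} = [h, \mathcal{M}_f^*(k_y^2)]_{\mathcal{B}_1}$ for all $h\in\mathcal{B}_1$ via the reproducing identity in each space and the definition of the s.i.p.\ adjoint, and then conclude equality of the representatives. Your version merely makes explicit two points the paper leaves tacit — that $(k_y^2)^*\circ \mathcal{M}_f$ is continuous so $k_y^2$ lies in $D(\mathcal{M}_f^*)$, and that the final step is the uniqueness clause of the Riesz representation theorem — which is a welcome tightening but not a different argument.
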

\begin{proof}
For any $h \in \mathcal{B}_1$, we have that
\begin{equation}
[h, \overline{f(y)}k_y^1]_1 = f(y)h(y) = [\mathcal{M}_f(h), k_y^2]_2 = [h, \mathcal{M}_f^*(k_y^2)],
\end{equation}
and hence, $\overline{f(y)}k_y^1 = \mathcal{M}_f^*(k_y^2)$.
\end{proof}

\begin{theorem}
Suppose $\mathcal{B}$ and $\mathcal{B}^{\sharp}$ defined as above. then $\mathcal{M}_{\mathcal{B}} \cong \mathcal{M}_{\mathcal{B}^{\sharp}}$
\end{theorem}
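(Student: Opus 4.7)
The natural candidate for the isomorphism $\mathcal{M}_{\mathcal{B}}\cong\mathcal{M}_{\mathcal{B}^{\sharp}}$ is the map $\Phi$ obtained by passing a multiplication operator on $\mathcal{B}$ to its adjoint and then reading off the resulting multiplier of $\mathcal{B}^{\sharp}$ from its action on the generating kernels $\{K(\cdot,y):y\in X\}$. The plan is to make $\Phi$ well-defined, bijective, and norm-preserving, using Theorem~1.4 and Theorem~2.3 as the main tools.

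First, given $f\in\mathcal{M}(\mathcal{B})$, I would observe that the closed graph theorem, together with continuity of point evaluations on $\mathcal{B}$, forces $\mathcal{M}_{f}:\mathcal{B}\to\mathcal{B}$ to be bounded. Its adjoint $\mathcal{M}_{f}^{*}$ (in the sense of Definition~1.3) is then bounded on $\mathcal{B}'$, and by the Riesz-type Theorem~1.2 combined with Theorem~1.4 we may identify $\mathcal{B}'$ isometrically with $\mathcal{B}^{\sharp}$, so that $\mathcal{M}_{f}^{*}$ descends to a bounded operator on $\mathcal{B}^{\sharp}$.

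Next, I would apply Theorem~2.3 with $\mathcal{B}_{1}=\mathcal{B}_{2}=\mathcal{B}$ to compute $\mathcal{M}_{f}^{*}(k_{y})=\overline{f(y)}\,k_{y}$ for every $y\in X$. Because $\mathcal{B}^{\sharp}=\mathrm{span}\{K(x,\cdot):x\in X\}$, this equality says that on the spanning set $\mathcal{M}_{f}^{*}$ is precisely pointwise multiplication by $\overline{f}$; extending by linearity and continuity, $\overline{f}$ multiplies all of $\mathcal{B}^{\sharp}$ into itself. Hence $\Phi(f):=\overline{f}$ lies in $\mathcal{M}(\mathcal{B}^{\sharp})$. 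Surjectivity of $\Phi$ follows by running the same argument with the roles of $\mathcal{B}$ and $\mathcal{B}^{\sharp}$ interchanged, using reflexivity to identify $(\mathcal{B}^{\sharp})^{\sharp}$ with $\mathcal{B}$. Since $\|\mathcal{M}_{f}\|=\|\mathcal{M}_{f}^{*}\|$, the correspondence preserves the multiplier norm, yielding the claimed isometric (anti-)isomorphism.

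The main obstacle will be the bookkeeping associated with the identification $\mathcal{B}'\cong\mathcal{B}^{\sharp}$ together with the conjugate-linearity inherited from the semi-inner product: one must verify that the assignment $f\mapsto\overline{f}$ actually lands in $\mathcal{M}(\mathcal{B}^{\sharp})$ (rather than in some conjugate variant), and that the extension from the dense span $\{K(x,\cdot)\}$ to all of $\mathcal{B}^{\sharp}$ is legitimate — which is exactly what Theorem~1.4 guarantees, since it provides a well-defined norm on $\mathcal{B}^{\sharp}$ under which point evaluations are continuous.
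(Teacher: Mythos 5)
Your outline founders at its central step. Theorem~2.3 gives $\mathcal{M}_f^*(k_y)=\overline{f(y)}\,k_y$: the adjoint scales each kernel function $k_y=K(\cdot,y)$ (an element of $\mathcal{B}$, a function of the \emph{first} variable) by the \emph{constant} $\overline{f(y)}$. Under the identification of functionals with functions that underlies $\mathcal{B}^{\sharp}$ (the functional $[\,\cdot\,,k_y]_{\mathcal{B}}$ corresponds to the function $K(y,\cdot)$, since $[k_t,k_y]=K(y,t)$), this relation says that the induced operator on $\mathcal{B}^{\sharp}$ sends the generator $K(y,\cdot)$ to $f(y)\,K(y,\cdot)$ — each generator is scaled by a scalar depending on its index $y$. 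That is \emph{not} pointwise multiplication by $\overline{f}$ (nor by $f$): multiplication by $\overline{f}$ would send $K(y,\cdot)$ to the function $t\mapsto \overline{f(t)}\,K(y,t)$, which differs from $f(y)K(y,\cdot)$ unless $f$ is constant. So the sentence ``on the spanning set $\mathcal{M}_f^*$ is precisely pointwise multiplication by $\overline{f}$'' conflates two different operators, and nothing in Theorem~2.3 or Theorem~1.4 shows that $\overline{f}g\in\mathcal{B}^{\sharp}$ for $g\in\mathcal{B}^{\sharp}$. A concrete test already kills it in the Hilbert case: for the Hardy space with the Szeg\H{o} kernel and $f(z)=z$, the function $\overline{f}\cdot K(0,\cdot)$ is (up to the conjugation built into the transposition) the function $z$, which is not a finite linear combination of kernel functions; since $\mathcal{B}^{\sharp}$ is defined as the plain (non-closed) span, $\Phi(f)=\overline{f}$ does not land in $\mathcal{M}(\mathcal{B}^{\sharp})$.

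There are secondary gaps as well. ``Extending by linearity and continuity'' can at best produce an operator on a completion of $\mathcal{B}^{\sharp}$, which is a different space from the span the paper defines; and in a genuinely non-Hilbert s.i.p.\ space the duality map $h\mapsto[\,\cdot\,,h]$ is nonlinear, so transporting $\mathcal{M}_f^*$ from $\mathcal{B}'$ to $\mathcal{B}^{\sharp}$ and then arguing ``by linearity'' needs separate justification. Surjectivity is asserted by symmetry via $(\mathcal{B}^{\sharp})^{\sharp}\cong\mathcal{B}$, which is an unproved claim of comparable difficulty and presupposes that $\mathcal{B}^{\sharp}$ is itself an s.i.p.\ RKBS (complete, uniformly convex, uniformly Fr\'echet differentiable) — none of which is established; the intended meaning of $\cong$ (algebra isomorphism, isometry, anti-isomorphism) is also never fixed. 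Note finally that the paper, being an extended abstract, states this theorem without any proof, so there is nothing to compare your route against; as it stands, your argument would need a genuinely new idea linking a multiplier of $\mathcal{B}$ to an honest multiplication operator in the second variable, which the adjoint relation alone does not supply.
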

\begin{theorem}
The space of $\mathcal{B}_0=\{g\in \mathcal{B}^{\sharp} ; \quad \Vert g \Vert_{\mathcal{B}^{\sharp}}=1\}$ is a subspace of $\mathcal{B}$ as a s.i.p. RKBS.
\end{theorem}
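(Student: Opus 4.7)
The strategy is first to interpret the statement carefully, then to verify the defining RKBS axioms on the putative subspace. A set of unit-norm vectors is never a genuine linear subspace (it omits $0$ and is not closed under addition), so I read the conclusion as asserting that $\mathcal{B}^{\sharp}$ embeds inside $\mathcal{B}$ as a function space carrying its own s.i.p.\ RKBS structure; equivalently, the closed linear span of $\mathcal{B}_0$ inside $\mathcal{B}$ inherits from $\mathcal{B}$ all the structural features of an s.i.p.\ RKBS, and the unit sphere $\mathcal{B}_0$ is the normalized slice of that span.

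With this reading, I would proceed in three steps. First, I would set up a set-theoretic inclusion of $\mathcal{B}^{\sharp}$ into $\mathcal{B}$ by identifying a typical element $g = \sum_{j} c_j K(x_j,\cdot)$ with the function $x\mapsto g(x)$, and then using the reproducing identity $[f,K(\cdot,x)]_{\mathcal{B}} = f(x)$ together with the s.i.p.\ Riesz theorem (Theorem in the excerpt) to identify each $K(x,\cdot)$ with the Riesz representative in $\mathcal{B}$ of the evaluation functional at $x$. This pins $\mathcal{B}^{\sharp}$ down as a concrete linear subset of $\mathcal{B}$.

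Second, I would compare norms. The defining supremum
\[
\|g\|_{\mathcal{B}^{\sharp}} = \sup_{f\neq 0} \frac{|[f,g]_{\mathcal{B}}|}{\|f\|_{\mathcal{B}}}
\]
is exactly the operator norm of the functional $f\mapsto [f,g]_{\mathcal{B}}$; by the s.i.p.\ Riesz theorem this equals $\|g\|_{\mathcal{B}}$. Hence the embedding in step one is isometric, and the unit sphere of $\mathcal{B}^{\sharp}$ coincides with the unit sphere of the corresponding closed subspace of $\mathcal{B}$. In particular $\mathcal{B}_0$ is genuinely the unit sphere inside $\mathcal{B}$ of an honest closed linear subspace. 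Third, I would verify the RKBS axioms for that subspace: reflexivity, uniform convexity, and uniform Fr\'echet differentiability all pass to closed subspaces; continuity of point evaluations on the subspace is immediate from the theorem quoted above that asserts equivalence of point-evaluation continuity on $\mathcal{B}$ and on $\mathcal{B}^{\sharp}$.

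The main obstacle I expect is step one. Because $K$ need not be Hermitian in the Banach setting, the generators $K(\cdot,x)$ of $\mathcal{B}$ and the generators $K(x,\cdot)$ of $\mathcal{B}^{\sharp}$ are a priori different functions on $X$, and making sense of the inclusion requires a careful use of the duality $g\mapsto g^{*}$ furnished by the s.i.p.\ Riesz theorem. Showing that the duality map realizes $K(x,\cdot)$ as an element of $\mathcal{B}$ (rather than merely of the dual $\mathcal{B}'$) is the crux, and everything else — norm identification, inheritance of reflexivity, continuity of evaluations — follows from the previously stated results.
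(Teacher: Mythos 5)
You should know at the outset that the paper itself gives no proof of this theorem: it is an extended-abstract announcement, and Theorems 2.4 and 2.5 are stated without argument, so there is nothing in the source to compare your route against. Judged on its own terms, your reinterpretation of the statement is the right instinct — the set $\{g\in\mathcal{B}^{\sharp}:\ \Vert g\Vert_{\mathcal{B}^{\sharp}}=1\}$ is a unit sphere, not a linear subspace, so some recasting (inclusion of $\mathcal{B}^{\sharp}$, or of its norm-closure, into $\mathcal{B}$) is unavoidable if the claim is to mean anything. But your proposal stops exactly where the theorem actually lives. You yourself label step one — realizing each $K(x,\cdot)$ as an element of $\mathcal{B}$ rather than of $\mathcal{B}'$ — as ``the crux,'' and then leave it unproved. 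The s.i.p.\ Riesz theorem only tells you that the bounded functional $f\mapsto [f,g]_{\mathcal{B}}$ has \emph{some} representative $h\in\mathcal{B}$ with matching norm; it does not tell you that $h$, viewed as a function on $X$, coincides with $g=K(x,\cdot)$, and in a non-Hermitian Banach setting there is no reason it should without an extra compatibility hypothesis linking $K$ to the semi-inner-product kernel. Since your step two (the identity $\Vert g\Vert_{\mathcal{B}^{\sharp}}=\Vert g\Vert_{\mathcal{B}}$) presupposes that $g$ already lies in $\mathcal{B}$ so that $[g,g]_{\mathcal{B}}=\Vert g\Vert_{\mathcal{B}}^{2}$ is available, the whole chain hangs on the unestablished crux.

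Two further points need attention even if that inclusion is granted. First, a semi-inner product is in general not additive in its second slot, so for a formal combination $g=\sum_j c_j K(x_j,\cdot)$ the quantity $[f,g]_{\mathcal{B}}$ appearing in the definition of $\Vert\cdot\Vert_{\mathcal{B}^{\sharp}}$ is not defined ``by linearity''; it only makes sense once $g$ is known to be an element of $\mathcal{B}$, which is again the crux, or once one replaces $[f,g]_{\mathcal{B}}$ by a genuinely bilinear pairing — a choice your argument should make explicit. Second, in step three you assert that the RKBS axioms pass to closed subspaces: uniform convexity, uniform Fr\'echet differentiability and reflexivity do, but the axiom that the \emph{dual} be isometric to a space of functions on $X$ with continuous point evaluations does not transfer automatically, because the dual of a closed subspace is a quotient of $\mathcal{B}'$ and identifying that quotient with a function space on $X$ requires an argument (for instance via the kernel sections themselves). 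So what you have is a sensible plan with the decisive step and a supporting step both missing, not yet a proof.
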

\bigskip
\section*{Acknowledgement} Research supported in part by Kavosh Alborz Institute
of Mathematics and Applied Sciences.

\hspace{1in}

\bibliographystyle{amsplain}

\end{document}